\tikzstyle{vertex}=[circle, draw, inner sep=0pt, minimum size=6pt]
\def \N {{\mathbb{N}}}
\def \Z {{\mathbb{Z}}}
\def \W {{\mathcal{W}}}
\newtheorem*{theorem*}{Theorem}
\newtheorem{theorem}{Theorem}
\newtheorem{cor}[theorem]{Corollary}
\newtheorem{lemma}[theorem]{Lemma}
\newtheorem{definition}[theorem]{Definition}
\newtheorem{rem}[theorem]{Remark}
\newtheorem*{ex*}{Example}
\newtheorem{pro}[theorem]{Proposition}
\title{Achiral words}
\author {
Shrinit Singh and A. Satyanarayana Reddy\\
Department of Mathematics,\\ Shiv Nadar Institution of Eminence, India-201314\\ (e-mail:
ss101@snu.edu.in, satya.a@snu.edu.in).
}
\date{}
\begin{document}
\maketitle
\begin{abstract}
 A word $w$ in a free group is  {\em achiral} if for every group $G,$ $G_w=G_{w^{-1}},$ where $G_w$ is the image of the word map $w$ on $G.$ We will give few classes of examples of  achiral words. Cocke and Ho asked  whether Engel words are achiral or not. We will prove that it is enough to apply Whitehead's algorithm to check the same.
\end{abstract}
{\bf{Key Words}}: Free group, Word map, Chiral word \\
{\bf{AMS(2020)}}:20F10 \\

\section{Introduction}
Let $F_n$ be a free group on $n$  generators $\{x_1,x_2,\ldots,x_n\}.$ A non-empty reduced word $w \in F_n$ is an expression $\prod_{j=1}^s x_{i_j}^{a_j}$, where $i_j \in \{ 1,2, \ldots, n\}, a_j \in \Z\setminus \{0\}$ and for every $j \in \{1,2, \ldots, s\},$ $x_{i_j} \neq (x_{i_{j+1}})^{-1}.$  In this paper, word, we mean a reduced word.  The length of the reduced word $w$ is defined by $\Sigma_{j=1}^s |a_j|.$  For any group $G$, let $G^{(n)}$ denote the direct product of $n$ copies of $G$. Corresponding to the word $w$, the word map $w$ on $G$ is an evaluation map from $G^{(n)}$ to $G$, defined as  $$(g_1,g_2, \ldots, g_n) \mapsto w(g_1,g_2, \ldots, g_n) = \prod_{j=1}^s g_{i_j}^{a_j}.$$
The image of the word map $w$ on $G$ is denoted by $G_w.$ It is clear that $G_w$ is nonempty as the identity element $e\in G_w.$
For example, if $w=x_1x_2x_1^{-1}x_2^{-1}\in F_2$ is a word of length $4$ and $G$ is an abelian group, then $G_w=\{e\}=G_{w^{-1}}.$

 Word maps have become an important tool to explore more about groups. One can refer~\cite{Bray}, \cite{cockeho}, \cite{Segal}, \cite{nikolov2007characterization}   to see the applications of word maps on groups. In particular, the image of a word map on a finite simple group is studied by~\cite{Jambor}, \cite{kassabov}, \cite{Levy}, \cite{ore}.

 It is easy to see that the image of a word map on a group $G$ corresponding to any given word is invariant under  endomorphism of $G$.  A group $G$ is said to satisfy the {\em property $\mathcal{P}$} if for every subset $S$ of $G$ with $e \in S$ and $S$ is invariant under every automorphism of $G$ can be seen as an image of a word map. In 2012, Alexander Lubotzky~\cite{Lubotzky} proved that all finite simple groups satisfy the property $\mathcal{P}.$  Carolyn Ashurst~\cite{Car}, in her thesis, asked that for any finite group $G$, whether under every word map, the cardinality of preimage of an element $g$ is same as the cardinality of preimage of $g^{-1}.$ Cocke and Ho~\cite{cocke} by using Lubotzky's result showed that this is not true and defined {\em chiral word}. A word $w \in F_n$ is said to be {\em chiral} if there exists a group $G$ such that $G_w \neq G_{w^{-1}}$. We define a word to be {\em achiral} if it is not {\em chiral}. A group $G$ is said to be {\em chiral} if there exists a word $w \in F_n$ for some $n$, such that $G_w \neq G_{w^{-1}}$. If $G$ is not chiral then we call $G$ as  {\em achiral}.  Let $X$ be a subset of a group $G$. Then $X$ is inverse closed if $X=X^{-1},$ where  $X^{-1} = \{ g^{-1} | g \in X \}.$ It is clear that ${G_w}^{-1} = G_{w^{-1}}$. Hence a word $w$ is achiral if for every group $G,$ $G_w = (G_w)^{-1}.$ That is a word $w$ is achiral if and only if $G_w$ is inverse closed for every group $G.$ For example, the sets $G^k=\{g^k|g\in G\},k\in \Z$ are inversely closed for any group $G,$ hence the words $w=x_1^k,k\in \Z$ are achiral. Consequently, every word from $F_1$ is achiral. Let $G$ be a group then  the subset $S=\{ghg^{-1}h^{-1}|g,h\in G\}$ of $G$ is inverse closed. Hence the word $w=x_1x_2x_1^{-1}x_2^{-1}$ is achiral. If the word map $w$ is onto for every group $G$ {\it i.e.,} $G_w=G,$ then $w$ is achiral. For example, $w=x_3x_1x_2x_1^{-1}x_2^{-1}\in F_3$ and $w=x_1x_2x_1x_2^2\in F_2$ are achiral. Let
 $$A_n=\{w\in F_n|\;\mbox{$w$ is achiral}\}$$
 $$B_n=\{w\in F_n|\; \mbox{for every group}\; G,\;G_w=G\}.$$ Then it is clear that $B_n\subseteq A_n.$
 It is known that the Nielsen transformations do not change the image of a word map in a group. Let $Aut(F_n)$ denote the set of all automorphisms of $F_n.$  Thus if $w\in F_n$ and $\sigma\in Aut(F_n),$ then $G_w=G_{\sigma(w)}.$ Hence the automorphic image of an achiral word is achiral. We will prove this result as a consequence of Proposition~\ref{pro:endo} in  section~\ref{sec:main}.  This observation will help us to provide a few elements in $B_n.$ A word $w \in F_n$ is said to be {\em primitive} if $w$ can be a part of a basis of $F_n$. A primitive word will always be achiral as it is an automorphic image of achiral word $x_1$. Since $x_1\in B_n,$ the set of all primitive words is a  subset of $B_n.$ But there are words in $B_n$ which are not primitive , for example $x_1^2x_2^3$. Akbar Rhemtulla~\cite{Rehmtulla} gave a complete description of $B_n$ which is as follows.

\begin{lemma}[Akbar Rhemtulla~\cite{Rehmtulla}]
Let $w \in F_n.$ Then $w\in B_n$ if and only if there exists integers $e_1,\ldots,e_n$ with $gcd(e_1,e_2,\ldots,e_n)=1$ such that $w\in {x_1}^{e_1}\ldots{x_n}^{e_n}{F_n}^{\prime},$ where ${F_n}^{\prime}$ is the commutator subgroup of $F_n.$
\end{lemma}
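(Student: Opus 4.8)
The plan is to reformulate the condition in terms of exponent sums and then treat the two directions separately. For $w \in F_n$, let $e_i$ denote the total exponent sum of $x_i$ in $w$; the abelianization map $F_n \to F_n/F_n' \cong \Z^n$ sends $w$ to $(e_1,\ldots,e_n)$, so the membership $w \in x_1^{e_1}\cdots x_n^{e_n}F_n'$ holds automatically, and the $e_i$ appearing in the statement are forced to be exactly these exponent sums. Thus the genuine content of the lemma is the equivalence ``$w \in B_n$'' $\iff$ ``$\gcd(e_1,\ldots,e_n)=1$''.

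For necessity, I would test surjectivity on the single abelian group $G = \Z$. Writing $\Z$ additively, the word map collapses to $w(a_1,\ldots,a_n) = e_1 a_1 + \cdots + e_n a_n$, since in an abelian group only exponent sums survive. Hence the image is $\Z_w = \{e_1 a_1 + \cdots + e_n a_n : a_i \in \Z\} = \gcd(e_1,\ldots,e_n)\,\Z$. If $w \in B_n$, then $\Z_w = \Z$, which forces $\gcd(e_1,\ldots,e_n)=1$.

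For sufficiency, I would exploit the automorphism invariance $G_w = G_{\sigma(w)}$ recorded earlier for $\sigma \in \mathrm{Aut}(F_n)$. Suppose $\gcd(e_1,\ldots,e_n)=1$, so $(e_1,\ldots,e_n)$ is a primitive (unimodular) vector in $\Z^n$ and therefore extends to a $\Z$-basis; equivalently there is a matrix $M \in \mathrm{GL}_n(\Z)$ carrying it to $(1,0,\ldots,0)$. The key structural input is the classical fact that the map $\mathrm{Aut}(F_n) \to \mathrm{GL}_n(\Z)$ induced on abelianizations is surjective (realized by Nielsen transformations), so $M$ lifts to some $\sigma \in \mathrm{Aut}(F_n)$. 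Then $\sigma(w)$ has exponent-sum vector $(1,0,\ldots,0)$, i.e. $\sigma(w) = x_1 c$ with $c \in F_n'$. Since $G_w = G_{\sigma(w)}$, it suffices to show $G_{\sigma(w)} = G$ for every group $G$.

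To finish, I would fix any group $G$ and any $g \in G$ and substitute $x_1 = g$, $x_2 = \cdots = x_n = e$; this substitution defines a homomorphism $F_n \to G$ that factors through the quotient $F_n \to F_n/\langle\langle x_2,\ldots,x_n\rangle\rangle \cong F_1 \cong \Z$. Because $c \in F_n'$, its image in the abelian group $F_1$ is trivial, so $c$ evaluates to $e$ under this substitution and $\sigma(w)(g,e,\ldots,e) = g \cdot e = g$. As $g$ was arbitrary, $G_{\sigma(w)} = G$, which completes the proof. I expect the main obstacle to be the surjectivity of $\mathrm{Aut}(F_n) \to \mathrm{GL}_n(\Z)$ together with the explicit lifting of $M$ to an automorphism; the necessity direction and the final substitution are direct computations.
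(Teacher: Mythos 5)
Your proof is correct; note, though, that the paper itself gives no proof of this lemma (it is quoted directly from Rhemtulla's paper), so the comparison is with the standard argument and with the closest computation the paper actually performs. Your skeleton is sound: the identification of the $e_i$ with exponent sums via abelianization, the necessity test on $G=\Z$ giving image $\gcd(e_1,\ldots,e_n)\Z$, and the final evaluation step are all correct as written. However, the step you flag as the main obstacle --- lifting a matrix $M\in \mathrm{GL}_n(\Z)$ through the surjection $\mathrm{Aut}(F_n)\to \mathrm{GL}_n(\Z)$ to normalize $w$ to $x_1c$ --- is genuinely unnecessary. Since $\gcd(e_1,\ldots,e_n)=1$, choose integers $m_1,\ldots,m_n$ with $e_1m_1+\cdots+e_nm_n=1$ and, for an arbitrary $g\in G$, evaluate at $x_i\mapsto g^{m_i}$: this evaluation homomorphism has image inside the cyclic, hence abelian, subgroup $\langle g\rangle$, so it factors through abelianization, the $F_n'$-part of $w$ dies, and $w(g^{m_1},\ldots,g^{m_n})=g^{e_1m_1+\cdots+e_nm_n}=g$ directly. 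This one-line substitution is exactly the device the paper uses in its final paragraph (defining $\phi(x_i)=w^{m_i}$ to show $B_n\cap T_n=\emptyset$ for $n\geq 2$), and it replaces both the appeal to Nielsen realizability and the normalization $\sigma(w)=x_1c$; your version buys nothing extra here, since automorphism invariance of $G_w$ is only being used to reduce to a computation that can be done without it. So: accept the proof as valid, but the sufficiency direction can be shortened to the bare B\'ezout substitution.
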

It is known \cite{Benkoski} that the probability of an element $(a_1,a_2,\ldots,a_k)\in \Z^n$ such that\\ $gcd(a_1,a_2,\ldots,a_k)=1$ is $\frac{1}{\zeta(k)}.$ Hence the probability for a word $w\in F_n$ belongs to $A_n$ is positive. In  section~\ref{sec:main}, we provide a  necessary and sufficient condition for a word $w\in F_n$ is achiral. By using this characterization, one can get more elements in $A_n.$ In the last subsection we will prove that it is enough to apply Whitehead's algorithm for Engel word's achirality.

\section{Main Results}\label{sec:main}
The following result was stated in the thesis
 of William Cocke~\cite{William}. We are giving the proof for the sake of completeness.
\begin{pro}\label{pro:endo}
Following two statements are equivalent for a word $w \in F_n$:
\begin{enumerate}
\item $w\in A_n.$
\item There exists an endomorphism $\psi$ of $F_n$ such that $\psi(w) = w^{-1}.$
\end{enumerate}
\end{pro}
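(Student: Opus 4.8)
The plan is to prove the two implications separately, with the free group $F_n$ itself serving as the decisive test group in the harder direction.

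For $(2)\Rightarrow(1)$, I would first record the basic substitution fact already noted in the introduction: if $\psi\in\operatorname{End}(F_n)$ sends $x_i\mapsto v_i$, then evaluating $\psi(w)$ at any tuple $(g_1,\dots,g_n)\in G^{(n)}$ returns $w\bigl(v_1(g),\dots,v_n(g)\bigr)$, which is an element of $G_w$; hence $G_{\psi(w)}\subseteq G_w$ for every group $G$. Applying this to the hypothesis $\psi(w)=w^{-1}$ gives $G_{w^{-1}}\subseteq G_w$, and since $G_{w^{-1}}=(G_w)^{-1}$, this reads $(G_w)^{-1}\subseteq G_w$. Taking inverses of both sides of the last containment yields the reverse inclusion $G_w\subseteq(G_w)^{-1}$, so $G_w=(G_w)^{-1}=G_{w^{-1}}$. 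As $G$ was arbitrary, $w\in A_n$.

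For $(1)\Rightarrow(2)$, the key idea is to exploit achirality for the single group $G=F_n$. Evaluating the word map $w$ at the generating tuple $(x_1,\dots,x_n)$ returns $w$ itself, so $w\in(F_n)_w$. Achirality gives $(F_n)_w=(F_n)_{w^{-1}}=\bigl((F_n)_w\bigr)^{-1}$, so $(F_n)_w$ is inverse closed; in particular $w^{-1}\in(F_n)_w$. By the definition of the image there exist $u_1,\dots,u_n\in F_n$ with $w(u_1,\dots,u_n)=w^{-1}$. The universal property of the free group then lets me define an endomorphism $\psi$ of $F_n$ by $\psi(x_i)=u_i$, and by construction $\psi(w)=w(u_1,\dots,u_n)=w^{-1}$, which is exactly statement $(2)$.

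The only real obstacle is recognizing that $F_n$ is the right group to test: the whole force of the forward direction comes from the observation that $w$ always lies in its own image over $F_n$, so that inverse-closedness of $(F_n)_w$ immediately manufactures the required substitution $u_1,\dots,u_n$. Everything else is routine bookkeeping, namely the identity $G_{w^{-1}}=(G_w)^{-1}$ and the elementary fact that a containment of a set in its own inverse is in fact an equality.
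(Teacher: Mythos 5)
Your proposal is correct and follows essentially the same route as the paper's proof: the forward direction tests achirality on $G=F_n$ itself, using $w\in (F_n)_w$ to extract a tuple $u_1,\dots,u_n$ with $w(u_1,\dots,u_n)=w^{-1}$ and hence the endomorphism $\psi(x_i)=u_i$, while the converse is the same substitution argument showing each $g^{-1}\in G_w$ (your abstraction to $G_{\psi(w)}\subseteq G_w$ is just a cleaner packaging of the paper's pointwise computation). No gaps; the inverse-closedness bookkeeping via $(G_w)^{-1}=G_{w^{-1}}$ matches the paper's setup in the introduction.
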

\begin{proof}
Let $w\in F_n.$ We can view $w$  as a word map on $F_n$ as $w :{F_n}^n \longrightarrow F_n$ and   $w(x_1,\ldots,x_n) = w$ {\it i.e.,} $w\in (F_n)_w.$ Suppose $w$ is achiral. If we take $G=F_n$, then $w^{-1} \in{(F_n)}_{w^{-1}}= {(F_n)}_w.$ It implies that there exists $w_1, \ldots, w_n \in F_n$ such that $w(w_1, \ldots, w_n) = w^{-1}$. This suggests that the endomorphism $\psi$ of $F_n$ defined by the map $x_i \mapsto w_i$ $\forall i \in \{1,2,\ldots,n\}$ will give $\psi(w) = w^{-1}.$

Let $\psi$ be an  endomorphism  of $F_n$ such that $\psi(w) = w^{-1}.$ Let $G$ be a group and $g\in G_w.$
Then there exists $g_1,\ldots,g_n \in G$ such that $w(g_1, \ldots, g_n) = g.$ We have $$\psi(w(x_1,x_2,\ldots, x_n))=\psi(w)=w^{-1}.$$
Thus $g^{-1}=w^{-1}(g_1,\dots,g_n)=w((\psi(x_1),\ldots,\psi(x_n))(g_1,\ldots,g_n))\in G_w.$
\end{proof}
Let $End(F_n)$ denote the set of all endomorphisms of $F_n.$ Then $w\in F_n$ is achiral if and only if there exist $\sigma\in End(F_n)$ such that $\sigma(w)=w^{-1}.$
 The following observations are immediate from the Proposition~\ref{pro:endo}. 
\begin{cor}\label{cor:endo}
  Let $w\in F_n.$
  \begin{enumerate}
      \item \label{cor:endo:1} If $w$ is achiral, then $\sigma(w)$ will also be achiral for all $\sigma \in Aut(F_n).$
      \item \label{cor:endo:2} $w^n$ is achiral for all $n \in \Z.$
       \item \label{cor:endo:3} If $w^n$ for $n \in \Z\setminus\{0\}$ is achiral then so is $w$.
    \end{enumerate}
    \end{cor}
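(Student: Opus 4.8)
The plan is to read all three parts through the lens of Proposition~\ref{pro:endo}: a word $u\in F_n$ lies in $A_n$ if and only if there is some $\psi\in End(F_n)$ with $\psi(u)=u^{-1}$. So in each part I would produce (or extract) the relevant endomorphism and verify the defining identity, pushing everything back to the criterion. Parts \ref{cor:endo:1} and \ref{cor:endo:2} are then purely formal; the only genuine content is in part \ref{cor:endo:3}.

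For part \ref{cor:endo:1}, I would start from an endomorphism $\psi$ furnished by the achirality of $w$, so that $\psi(w)=w^{-1}$. Given $\sigma\in Aut(F_n)$, the natural candidate witnessing achirality of $\sigma(w)$ is the conjugate $\phi=\sigma\circ\psi\circ\sigma^{-1}$. This is again an endomorphism of $F_n$ because $\sigma,\sigma^{-1}$ are automorphisms and $End(F_n)$ is closed under composition, and a one-line computation gives $\phi(\sigma(w))=\sigma(\psi(w))=\sigma(w^{-1})=(\sigma(w))^{-1}$. Proposition~\ref{pro:endo} then yields $\sigma(w)\in A_n$.

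For part \ref{cor:endo:2}, I would reuse the very same $\psi$ (here the ambient hypothesis is that $w$ is achiral, which is also what makes this the exact converse of part \ref{cor:endo:3}). Since $\psi$ is a homomorphism, $\psi(w^n)=\psi(w)^n=(w^{-1})^n=(w^n)^{-1}$ for every $n\in\Z$, so $\psi$ simultaneously certifies the achirality of all powers $w^n$; the cases $n=0$ and $n<0$ require no separate argument.

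Part \ref{cor:endo:3} is where the real work lies. Assuming $w^n$ is achiral for some fixed $n\in\Z\setminus\{0\}$, Proposition~\ref{pro:endo} supplies $\psi\in End(F_n)$ with $\psi(w^n)=(w^n)^{-1}=w^{-n}$, and using that $\psi$ is a homomorphism this reads $\big(\psi(w)\big)^n=w^{-n}=(w^{-1})^n$. The goal $\psi(w)=w^{-1}$ then amounts to cancelling the exponent $n$, and this is the main obstacle: it is not a formal identity but relies on the structural fact that a free group has unique roots, i.e. $a^n=b^n$ with $n\neq 0$ forces $a=b$. I would justify this by recalling that in a free group the centralizer of any nontrivial element is infinite cyclic, generated by a unique maximal root, together with torsion-freeness to dispose of the case $a^n=e$; the case $n<0$ reduces to $n>0$ by passing to inverses. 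Applying this with $a=\psi(w)$ and $b=w^{-1}$ gives $\psi(w)=w^{-1}$, and Proposition~\ref{pro:endo} concludes that $w\in A_n$. I expect this root-uniqueness step to be the only point requiring a non-formal input about free groups, whereas parts \ref{cor:endo:1} and \ref{cor:endo:2} are immediate from the homomorphism property alone.
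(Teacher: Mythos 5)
Your proposal is correct and takes essentially the same route as the paper: the conjugated endomorphism $\sigma\circ\psi\circ\sigma^{-1}$ for part~\ref{cor:endo:1}, the power computation $\psi(w^k)=\psi(w)^k$ for part~\ref{cor:endo:2}, and extraction of $n$-th roots for part~\ref{cor:endo:3}. The only difference is that you explicitly justify the cancellation $u^n=w^{-n}\Rightarrow u=w^{-1}$ via unique roots in free groups (torsion-freeness together with cyclic centralizers), a step the paper's proof of part~\ref{cor:endo:3} asserts without comment, so your write-up fills in a detail rather than diverging in method.
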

     \begin{description}
   \item[Proof of Part~\ref{cor:endo:1}.] Since $w$ is achiral, there exists an endomorphism $\phi$ taking $w$ to $w^{-1}$. Then the endomorphism defined by $\sigma \circ \phi \circ \sigma^{-1}$  takes $\sigma({w})$ to  $\sigma(w)^{-1}.$

    \item [Proof of Part~\ref{cor:endo:2}.] Let $f:F_n\to F_n$ be an endomorphism  such that $f(w) = w^{-1}$. Then $f(w^k) = {f(w)}^k = w^{-k} \;\;\forall k \in \Z.$
    \item[Proof of Part~\ref{cor:endo:3}.] Let $\phi$ be endomorphism inverting the word $w^n$. Suppose $\phi(w) = u$, then we have $\phi(w^n) = u^n = w^{-n}$. Hence $u = w^{-1}$.
    \end{description}

    We have seen that the automorphic image of an achiral word will always be achiral. Cocke and Ho~\cite{cocke} proved that a homomorphic image of achiral group will always be achiral group. But a homomorphic image of achiral word need not be achiral. In particular, let $w$ be a chiral word, then we can construct a required  homomorphism  $\phi$ such that  $\phi(x_1)=w.$ The next result states that a word being achiral is independent of free groups.
    \begin{pro}
        Let $w \in F_m \subseteq F_n$ for $m \leq n.$ If $w$ is achiral (chiral) in $F_m$ if and only if $w$ is achiral (chiral) in $F_n$.
    \end{pro}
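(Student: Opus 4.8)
The plan is to reduce everything to the endomorphism characterization of achirality established in Proposition~\ref{pro:endo}: a word $u$ is achiral in $F_k$ precisely when there exists an endomorphism of $F_k$ carrying $u$ to $u^{-1}$. Since chirality is by definition the negation of achirality, it suffices to prove the equivalence in the achiral case; the chiral statement then follows immediately by contraposition. So I would prove the single biconditional that $w$ admits an inverting endomorphism of $F_m$ if and only if $w$ admits an inverting endomorphism of $F_n$.

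For the forward implication, suppose $\psi \in End(F_m)$ satisfies $\psi(w) = w^{-1}$. I would extend $\psi$ to an endomorphism $\Psi \in End(F_n)$ by setting $\Psi(x_i) = \psi(x_i)$ for $1 \leq i \leq m$ and $\Psi(x_i) = x_i$ for $m < i \leq n$. Since $w$ is a reduced word in the letters $x_1, \ldots, x_m$ only, the extra generators are never evaluated, so $\Psi(w) = \psi(w) = w^{-1}$, and hence $w$ is achiral in $F_n$.

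The reverse implication requires slightly more care. Suppose $\Psi \in End(F_n)$ satisfies $\Psi(w) = w^{-1}$. The restriction of $\Psi$ to $F_m$ need not land inside $F_m$, so I would post-compose with the standard retraction $\rho : F_n \to F_m$ defined by $\rho(x_i) = x_i$ for $i \leq m$ and $\rho(x_i) = e$ for $i > m$. Then $\psi := (\rho \circ \Psi)|_{F_m}$ is an endomorphism of $F_m$, and because both $w$ and $w^{-1}$ lie in $F_m$, where $\rho$ acts as the identity, we get $\psi(w) = \rho(\Psi(w)) = \rho(w^{-1}) = w^{-1}$. Thus $w$ is achiral in $F_m$.

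The only genuinely load-bearing observation, and the step I expect to be the crux, is that the retraction $\rho$ fixes $F_m$ pointwise, so applying it after $\Psi$ leaves the value $w^{-1}$ undisturbed; this is exactly what permits an inverting endomorphism of the larger group to be pushed down to the smaller one. Everything else is a routine verification that the maps constructed are well-defined homomorphisms.
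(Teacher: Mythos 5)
Your proof is correct and follows essentially the same route as the paper: both reduce to the endomorphism characterization of Proposition~\ref{pro:endo}, extend an inverting endomorphism of $F_m$ to $F_n$ in the forward direction, and in the reverse direction compose with the retraction $\alpha : F_n \to F_m$ (your $\rho$) that kills the extra generators. The only cosmetic difference is that you extend by fixing the extra generators while the paper sends them to the identity, which changes nothing since $w$ involves only $x_1, \ldots, x_m$.
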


    \begin{proof}
          We will prove for achiral, chiral part will automatically follow.  Let $x_1,\ldots,x_m$ be a basis of $F_m$ and $x_1,\ldots,x_n$, extending the basis of $F_m$, be the chosen basis of $F_n$. Let $i$ be the inclusion map $F_m \overset{i}{\hookrightarrow} F_n$ and $\alpha$ be the surjection map $F_n \overset{\alpha}{\twoheadrightarrow} F_m$ defined by the map $\alpha(x_i)$ goes to $x_i$ if $1 \leq i \leq m$, $1$ otherwise.
          
          Suppose $w$ is achiral in $F_m.$ Then there exists an endomorphism $\phi$ of $F_m$ such that $\phi(w) = w^{-1}.$ We can extend this endomorphism to an endomorphism $\Bar{\phi}$ of $F_n$ sending each basis element which is not in $F_m$ to identity. Thus we have $\Bar{\phi}(w)=w^{-1}.$ 

          Let $w$ be achiral in $F_n.$ Then there exists an endomorphism $\psi$ of $F_n$ such that $\psi(w) = w^{-1}.$
          $$F_m {\overset{i} \hookrightarrow} F_n {\overset{\psi}{\rightarrow} F_n \overset{\alpha}{\twoheadrightarrow}} F_m$$
          The composition of these maps $\alpha \circ \psi \circ i $ is an endomorphism of $F_m$ such that $\alpha \circ \psi \circ i (w) = w^{-1}.$
    \end{proof}
    For a shorter proof one can use chiral words.
    \begin{rem}
    In fact the proof of above proposition also suggests that if there exists an automorphism of $F_m$ inverting the word $w$ if and only if there exists an automorphism of $F_n$ inverting the word $w.$
    \end{rem}
 
\begin{cor} \label{cor:endo1} There exists an automorphism inverting the following words.
    
    \begin{enumerate} 
   
      \item \label{cor:endo:4}  Every palindromic word is achiral.
    \item \label{cor:endo:5}  Every word of type $x_1^{m_1}x_2^{m_2}$, where $m_1 , m_2 \in \Z$, is achiral.
    \item \label{cor:endo:6}   Every word of type $x_1^mx_2^{\epsilon_1}x_1^nx_2^{\epsilon_2}$ where $m,n,\epsilon_i \in \Z$ and $\epsilon_1 = \pm \epsilon_2$ is achiral.
    
      \end{enumerate}
 \end{cor}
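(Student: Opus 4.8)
The plan is to prove something slightly stronger than achirality: for each listed word I will exhibit an \emph{automorphism} $\psi$ of $F_n$ with $\psi(w)=w^{-1}$, which by Proposition~\ref{pro:endo} certifies that $w$ is achiral (and, as the preceding remark records, producing an automorphism is exactly the conclusion the corollary asks for). The building blocks I will use are the sign-inverting automorphism $\iota\colon x_i\mapsto x_i^{-1}$ for all $i$, the partial inversion $\tau\colon x_1\mapsto x_1^{-1},\,x_2\mapsto x_2$, and inner automorphisms $u\mapsto g\,u\,g^{-1}$. The guiding observation is that for each of these structured words, $w^{-1}$ is a cyclic conjugate of the word obtained from $w$ by inverting an appropriate subset of the generators; composing a sign inversion with a single conjugation therefore lands exactly on $w^{-1}$.

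For Part~\ref{cor:endo:4} I would use $\iota$ alone. Writing a palindrome as a reduced word $w=a_1a_2\cdots a_k$ with each $a_j$ a generator or its inverse and $a_j=a_{k+1-j}$, one has $\iota(a_j)=a_j^{-1}$, so $\iota(w)=a_1^{-1}a_2^{-1}\cdots a_k^{-1}$. On the other hand the $j$-th letter of $w^{-1}=a_k^{-1}\cdots a_1^{-1}$ is $a_{k+1-j}^{-1}=a_j^{-1}$ by the palindrome condition, whence $\iota(w)=w^{-1}$. Since $\iota$ is its own inverse, it is an automorphism, and Part~\ref{cor:endo:4} follows.

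For Parts~\ref{cor:endo:5} and~\ref{cor:endo:6} I would compose a sign inversion with conjugation by the first block. For $w=x_1^{m_1}x_2^{m_2}$, take $\psi$ to be conjugation by $x_1^{m_1}$ composed with $\iota$, so $\psi(x_1)=x_1^{-1}$ and $\psi(x_2)=x_1^{m_1}x_2^{-1}x_1^{-m_1}$; a short telescoping computation gives $\psi(w)=x_2^{-m_2}x_1^{-m_1}=w^{-1}$. For $w=x_1^{m}x_2^{\epsilon_1}x_1^{n}x_2^{\epsilon_2}$ I would split on the sign relation. When $\epsilon_1=\epsilon_2$, the same recipe (conjugation by $x_1^{m}$ after $\iota$) yields $\psi(w)=x_2^{-\epsilon_2}x_1^{-n}x_2^{-\epsilon_1}x_1^{-m}=w^{-1}$, the equality of the two interior $x_2$-blocks being precisely what $\epsilon_1=\epsilon_2$ supplies. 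When $\epsilon_1=-\epsilon_2$, I would instead conjugate $\tau$ (which fixes $x_2$ and inverts only $x_1$) by $x_1^{m}$; then $\psi(x_2)=x_1^{m}x_2x_1^{-m}$ leaves the signs of the two $x_2$-blocks \emph{unchanged}, and the telescoping again collapses the interior $x_1$-powers to give $\psi(w)=w^{-1}$.

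The only genuine decision point, and hence the main obstacle, is Part~\ref{cor:endo:6}: one must choose whether to invert $x_2$ or leave it fixed according to whether $\epsilon_1=\epsilon_2$ or $\epsilon_1=-\epsilon_2$, so that the two $x_2$-exponents of $\psi(w)$ match those demanded by $w^{-1}=x_2^{-\epsilon_2}x_1^{-n}x_2^{-\epsilon_1}x_1^{-m}$. Once the correct inversion is paired with conjugation by $x_1^{m}$, every verification reduces to the telescoping identity $x_1^{m}u^{k}x_1^{-m}=(x_1^{m}ux_1^{-m})^{k}$ together with the cancellation $x_1^{-m}x_1^{-n}x_1^{m}=x_1^{-n}$, which is routine. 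Since each $\psi$ is built from $\iota$ or $\tau$ and an inner automorphism, it is a genuine automorphism, so every listed word is indeed inverted by an automorphism, as claimed.
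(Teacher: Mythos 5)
Your proposal is correct and takes essentially the same approach as the paper: the global inversion $x_i\mapsto x_i^{-1}$ for palindromes, and for the other two parts a sign inversion composed with an inner automorphism. Indeed, your maps for Part~3 coincide exactly with the paper's ($x_1\mapsto x_1^{-1}$, $x_2\mapsto x_1^{m}x_2^{\mp 1}x_1^{-m}$), and for Part~2 you merely conjugate by $x_1^{m_1}$ where the paper conjugates by $x_2^{-m_2}$ --- an immaterial variation, since both compositions send $x_1^{m_1}x_2^{m_2}$ to $x_2^{-m_2}x_1^{-m_1}$.
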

 \begin{description}
     \item [Proof of Part~\ref{cor:endo:4}.] Let $w \in F_n$ be a palindromic word. Take the automorphism generated by $f(x_i) = {x_i}^{-1}$ for $i \in \{1,2, \ldots,n \}.$ Then $f(w) = w^{-1}.$
     
    \item [Proof of Part~\ref{cor:endo:5}.] Here we prove the existence of automorphism inverting the word for $F_2$. Hence by remark of previous proposition the result holds. It is enough to prove for $x_1^{m_1}x_2^{m_2}$. Take automorphism $f$ generated by sending $x_1$ to $x_2^{-m_2}x_1^{-1}x_2^{m_2}$ and $x_2$ to $x_2^{-1}$. Then $f(x_1^{m_1}x_2^{m_2}) = x_2^{-m_2}x_1^{-m_1}.$

    \item [Proof of Part~\ref{cor:endo:6}.] Here we prove the existence of automorphism inverting the word for $F_2$. Hence by remark of previous proposition the result holds. When $\epsilon_1 = \epsilon_2$, take automorphism generated by sending $x_1$ to $x_1^{-1}$ and $x_2$ to $x_1^mx_2^{-1}x_1^{-m}$. If $\epsilon_1 = -\epsilon_2$,  take automorphism generated by sending $x_1$ to $x_1^{-1}$ and $x_2$ to $x_1^mx_2x_1^{-m}.$
    
  \end{description}

Makanin~\cite{Makanin}  has proved that whether a system of equations over a free group has a solution or not is algorithmically decidable. The following result shows that the achirality of a word is a decidable problem.

\begin{lemma}
   Showing achirality of a word $w$ in a free group $F_n$ is equivalent to solving an equation over $F_n.$
  \end{lemma}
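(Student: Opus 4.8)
The plan is to reduce achirality to the solvability of a single equation over $F_n$, and then invoke Makanin's theorem to conclude decidability. By Proposition~\ref{pro:endo}, the word $w$ is achiral if and only if there exists an endomorphism $\psi$ of $F_n$ with $\psi(w)=w^{-1}$. An endomorphism of $F_n$ is determined by the images $\psi(x_i)=y_i$ of the generators, where $y_1,\ldots,y_n$ are arbitrary elements of $F_n$. Writing $w=\prod_{j=1}^s x_{i_j}^{a_j}$, the condition $\psi(w)=w^{-1}$ becomes
$$\prod_{j=1}^s y_{i_j}^{a_j} = w^{-1},$$
which is precisely an equation in the unknowns $y_1,\ldots,y_n$ with the constant $w^{-1}$ on the right-hand side, an equation over $F_n$ in Makanin's sense.

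First I would make the correspondence between endomorphisms and $n$-tuples of group elements explicit: the assignment $\psi\mapsto(\psi(x_1),\ldots,\psi(x_n))$ is a bijection between $\mathrm{End}(F_n)$ and $F_n^{\,n}$, by the universal property of the free group. Next I would observe that under this bijection the single constraint $\psi(w)=w^{-1}$ translates verbatim into the equation displayed above, whose left side is obtained from $w$ by substituting the unknowns $y_i$ for the generators $x_i$, and whose right side is the fixed word $w^{-1}\in F_n$. Since $w$ is a fixed known word, $w^{-1}$ is a known constant, so this is a constant-coefficient equation in the free group $F_n$. Then I would record the equivalence in both directions: a solution $(y_1,\ldots,y_n)$ of the equation yields an endomorphism inverting $w$, hence $w$ is achiral by Proposition~\ref{pro:endo}; conversely, if $w$ is achiral, the inverting endomorphism furnished by Proposition~\ref{pro:endo} supplies a solution. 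Finally, since Makanin~\cite{Makanin} proved that solvability of an equation over a free group is algorithmically decidable, achirality of $w$ is decidable.

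There is essentially no hard analytic obstacle here; the content is purely a matter of correctly phrasing the problem. The one point that deserves care is to confirm that ``an equation over $F_n$'' in Makanin's formulation is exactly a word equation with constants drawn from $F_n$ and unknowns ranging over $F_n$, so that $\prod_j y_{i_j}^{a_j}=w^{-1}$ genuinely falls within the class Makanin treats; this is routine but must be stated. A secondary subtlety is that Proposition~\ref{pro:endo} characterizes achirality via endomorphisms rather than automorphisms, which is what makes the reduction to an \emph{unrestricted} system of unknowns (rather than a system constrained to invertible substitutions) possible, keeping us squarely within Makanin's decidable setting. I expect the entire argument to be short, with the only genuine step being the verification that the endomorphism condition and the word equation are literally the same object viewed two ways.
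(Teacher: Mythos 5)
Your proposal is correct and follows essentially the same route as the paper: both invoke Proposition~\ref{pro:endo} to translate achirality into the existence of an endomorphism inverting $w$, identify that endomorphism with the tuple of generator images, and thereby rewrite $\psi(w)=w^{-1}$ as a single constant-coefficient word equation over $F_n$ (the paper phrases it equivalently as $\prod_j x_{i_j}^{t_j}\prod_j w_{i_j}^{t_j}=1$), with Makanin's theorem supplying decidability. Your additional remarks on the endomorphism--tuple bijection and on why endomorphisms (rather than automorphisms) keep the problem in Makanin's unrestricted setting are sound elaborations of the same argument.
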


  \begin{proof}
  Let $w = \prod_{j=1}^l x_{i_j}^{t_j}\in F_n$ be achiral.  From the Proposition~\ref{pro:endo} there exist an endomorphism $\phi$ of $F_n$ such that $\phi(w)=w^{-1}.$  Since endomorphism of a free group is determined by the images of its generators, let $\phi(x_i) = w_i$  So $\phi(w) = \prod_{j=1}^l w_{i_j}^{t_j}$. Thus $w$ is achiral if and only if
   $$\prod_{j=1}^l x_{i_j}^{t_j}\prod_{j=1}^l w_{i_j}^{t_j} = 1.$$
  Hence checking a given word $w\in F_n$ is achiral is equivalent to solving an equation in $F_n.$
 \end{proof}

\subsection{Achiral words in $F_2$}
Let $F_2$ be a free group of rank $2$ with alphabet $X=\{x_1,x_2\}.$ If $w\in F_2,$ then $w$ is reduced word with letters $x_1,x_2,x_1^{-1},x_2^{-1}.$ We denote the length of word $w$ by
$\ell(w).$ Let $\W_n$ denote the set of all words of length $n$ {\it i.e.,}
$$\W_n=\{w\in F_2| \ell(w)=n\}.$$
It is easy to see that
$|\W_n|=4\cdot 3^{n-1}.$ For example,
$$\W_1=\{x_1,x_2,x_1^{-1},x_2^{-1}\}.$$
$$\W_2=\{x_1^2,x_2^2,x_1^{-2},x_2^{-2}, x_1x_2,x_1^{-1}x_2^{-1}, x_2x_1, x_2^{-1}x_1^{-1},  x_1^{-1}x_2,x_1x_2^{-1},x_2^{-1}x_1,x_2x_1^{-1}\}.$$

Let $$\W^n=\{w\in \W_n|\exists \sigma\in\; Aut(F_2),\sigma(w)=w^{-1}\}.$$ Hence $\W^n\subseteq A_2.$
Before we proceed, given an automorphism $f\in \; Aut(F_2)$, we define $$\W_f^n=\{w\in \W^n|f(w)=w^{-1}\}.$$

If $f(x_1)=x_1^{-1}, f(x_2)=x_2^{-1}; g(x_1)=x_2^{-1},g(x_2)=x_1^{-1}$ and $h(x_1)=x_2, h(x_2)=x_1,$ then it is easy that
 $$\W_f^2=\{x_1^2,x_2^2,x_1^{-2},x_2^{-2}\}, \W_g^2=
 \{x_1x_2,x_1^{-1}x_2^{-1}, x_2x_1, x_2^{-1}x_1^{-1}\}, \W_h^2=\{x_1^{-1}x_2,x_1x_2^{-1},x_2^{-1}x_1,x_2x_1^{-1}\}.$$
 Thus $\W_2=\W^2.$ That is every word of length $2$ over $F_2$ is achiral.  From the observations listed in Corollary~\ref{cor:endo} and \ref{cor:endo1}, it is easy to see that
$\W_i=\W^i$ for $i\in \{1,2,3,4,5\}$ and $W_6=A_6$ but the word $x_1^2x_2^2x_1x_2^{-1}\in \W_6\setminus \W^6$ and it is an  achiral word of shortest length such that it is not inverted by any automorphism (see~\cite{Gardam}, Remark 3.7). In short, Our result above with Rhemtulla's result are enough to prove that all words of length $6$ are achiral.

 \subsection{Test word and achiral word}
 Let $w\in F_n.$ We define $$End_w=\{f\in End(F_n)|f(w)=w^{-1}\}\;\;\;Aut_w=\{f\in Aut(F_n)|f(w)=w^{-1}\}.$$
It is clear that $Aut_w\subseteq End_w.$ From Proposition~\ref{pro:endo} we have $w\in A_n$ if and only if   $End_w\ne \emptyset.$ Let $w\in F_n$ be a primitive word. Then we can construct an endomorphism $\sigma$ satisfying $\sigma \in End_w\setminus Aut_w.$
As discussed earlier if $w=x^2y^2xy^{-1},$ then $w\in A_2$ that is $End_w\ne \emptyset$ but  $Aut_w=\emptyset.$ Let $D_n=\{w\in A_n|End_w=Aut_w\}.$
\begin{definition}
 A word $w\in F_n$ is a {\em test word} if every endomorphism which fixes $w$ is an automorphism.
\end{definition}

We denote set of all test words of $F_n$ as $T_n$ that is $$T_n=\{w\in F_n|\; \mbox{if  $\sigma\in End(F_n)$ with $\sigma(w)=w,$ then $\sigma\in Aut(F_n)$}\}.$$ Turner~\cite{Turner} gave a criterion to check a given word is a test word or not.

\begin{definition}
A subgroup $H$ of $G$ is said to be retract if there exists an endomorphism $\phi$ such that $\phi: G \longrightarrow H$ with the property $\phi(h) = h$ for all $h \in H.$
\end{definition}

\begin{lemma}[Turner~\cite{Turner}]
A word $w \in F_n$ is a test word if and only if it is not in any proper retract of $F_n$.
\end{lemma}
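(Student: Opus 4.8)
The plan is to prove both implications by contraposition, isolating the genuinely hard content in a single structural fact about endomorphisms of free groups.

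First I would dispatch the easy direction: a word lying in a proper retract is not a test word. Suppose $w$ lies in a proper retract $H$ of $F_n$, witnessed by an endomorphism $\rho \colon F_n \to H$ with $\rho|_H = \mathrm{id}_H$. Since $w \in H$ we get $\rho(w) = w$, while the image of $\rho$ is $H \subsetneq F_n$, so $\rho$ is not surjective and hence not an automorphism. Thus $w$ is fixed by a non-automorphism endomorphism, i.e. $w$ is not a test word. Contrapositively, every test word avoids all proper retracts.

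For the converse I would argue that a non-test word always lies in a proper retract. Assume $w$ is not a test word, so there is $\sigma \in End(F_n)$ with $\sigma(w) = w$ that is not an automorphism. Because finitely generated free groups are Hopfian, a non-automorphism endomorphism fails to be surjective, so $\sigma(F_n) \subsetneq F_n$. I would then pass to the \emph{stable image}
$$R = \bigcap_{k \ge 1} \sigma^k(F_n).$$
Two features are immediate: since $\sigma^k(w) = w$ for all $k$ we have $w \in \sigma^k(F_n)$ for every $k$, hence $w \in R$; and $R \subseteq \sigma(F_n) \subsetneq F_n$, so $R$ is a proper subgroup. It therefore suffices to know that $R$ is a retract of $F_n$, for then $w$ lies in the proper retract $R$, completing the contrapositive.

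The crux --- and the step I expect to be the main obstacle --- is exactly the claim that the stable image $R$ of an endomorphism of a finite-rank free group is a retract of $F_n$ (indeed $\sigma$ should restrict to an automorphism of $R$). This cannot be replaced by the single image $\sigma(F_n)$: already in $F_1$ the subgroup $m\Z$ with $|m|\ge 2$ is not a retract of $\Z$, whereas the stable image $\bigcap_k m^k\Z = \{e\}$ is. To establish it I would first use that the ranks of the descending chain $F_n \supseteq \sigma(F_n) \supseteq \sigma^2(F_n) \supseteq \cdots$ are non-increasing and hence stabilize, which reduces matters to an injective endomorphism $\tau = \sigma|_L$ of $L = \sigma^K(F_n)$; a surjection between free groups of equal finite rank is an isomorphism, so $\tau \colon L \to \sigma(L)$ is one, and $\tau$ is an injective endomorphism of $L$. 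The remaining work --- showing that $\bigcap_k \tau^k(L)$ is finitely generated and splits off as a retract --- is the deep part and rests on the structure theory of fixed subgroups and stable images of free-group monomorphisms (of the kind developed by Goldstein--Turner and Bestvina--Handel); I would cite this rather than reprove it, since it is precisely the content that Turner's theorem packages.
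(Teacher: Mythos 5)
The paper does not actually prove this lemma: it is imported verbatim from Turner's paper on test words, with only a citation. So the relevant comparison is with Turner's original argument, and your sketch reconstructs it faithfully. Your easy direction is complete and correct: a retraction $\rho$ onto a proper retract $H \ni w$ fixes $w$, and since its image is $H \subsetneq F_n$ it is not surjective, hence not an automorphism. Your converse is also structured exactly as in Turner's proof: Hopficity of $F_n$ turns a non-automorphism fixing $w$ into a non-surjection $\sigma$; the fixed word lies in every $\sigma^k(F_n)$ and hence in the stable image $R$, which is proper because $R \subseteq \sigma(F_n)$; and the whole weight falls on the theorem that the stable image of an endomorphism of a finite-rank free group is a retract (with $\sigma$ restricting to an automorphism of it). Your supporting details are right: ranks of the chain $\sigma^k(F_n)$ are non-increasing, a surjection between free groups of equal finite rank is an isomorphism, and your $m\mathbb{Z} \subseteq \mathbb{Z}$ example correctly explains why the single image cannot replace the stable image. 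Two caveats: as a self-contained proof the proposal is incomplete at exactly the step you flagged, so it proves the lemma only modulo that citation --- though this is no worse than the paper, which cites everything; and the attribution is slightly off: the statement that the fixed subgroup lies in the stable image and that the stable image is a retract is due to Imrich--Turner (``Endomorphisms of free groups and their fixed points'') together with Turner's test-word paper itself, whereas Goldstein--Turner and Bestvina--Handel concern fixed subgroups of (mono/auto)morphisms and are only background for that result.
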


 Turner also showed that $T_2={F_2}^{\prime}\setminus\{e\}.$

\begin{theorem}\label{thm:Tn}
If $n\in \N,$ then $T_n \cap A_n = D_n.$
\end{theorem}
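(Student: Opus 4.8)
The plan is to prove the two inclusions $T_n \cap A_n \subseteq D_n$ and $D_n \subseteq T_n \cap A_n$ separately, and in each direction the real content is an argument that forces a suitable endomorphism to be an automorphism. Since $Aut_w \subseteq End_w$ always holds and both sides of the claimed identity already sit inside $A_n$, the formal manipulations are light; the one structural fact I would lean on is that a finitely generated free group is Hopfian, so that every surjective endomorphism of $F_n$ is automatically an automorphism.

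For the inclusion $T_n \cap A_n \subseteq D_n$, let $w \in T_n \cap A_n$. Then $w \in A_n$, so it only remains to show $End_w = Aut_w$, i.e.\ $End_w \subseteq Aut_w$. Given $f \in End_w$, so that $f(w) = w^{-1}$, the key observation is that
$$f^2(w) = f(f(w)) = f(w^{-1}) = f(w)^{-1} = w,$$
so $f^2$ is an endomorphism fixing $w$. Because $w$ is a test word, $f^2 \in Aut(F_n)$. In particular $f^2$ is surjective, and since $f^2(F_n) \subseteq f(F_n) \subseteq F_n$, the map $f$ is itself surjective, hence an automorphism by the Hopfian property of $F_n$. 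Thus $f \in Aut_w$, which gives $End_w \subseteq Aut_w$ and therefore $w \in D_n$.

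For the reverse inclusion $D_n \subseteq T_n \cap A_n$, let $w \in D_n$, so that $w \in A_n$ and $End_w = Aut_w$; I must show $w$ is a test word. Since $w \in A_n$, Proposition~\ref{pro:endo} yields some $\phi \in End_w$, and the hypothesis $End_w = Aut_w$ makes $\phi$ an automorphism. Now take any $g \in End(F_n)$ with $g(w) = w$; then
$$(\phi \circ g)(w) = \phi(g(w)) = \phi(w) = w^{-1},$$
so $\phi \circ g \in End_w = Aut_w$ is an automorphism. Consequently $g = \phi^{-1} \circ (\phi \circ g)$ is a composition of automorphisms, hence an automorphism, which shows $w \in T_n$ and completes this inclusion.

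I expect the only delicate point to be the passage from $f^2 \in Aut(F_n)$ to $f \in Aut(F_n)$ in the first inclusion; this is precisely where the finite rank of $F_n$ is essential, through the fact that a surjective endomorphism of a finitely generated free group is an automorphism. Everything else is a formal manipulation of the two relations $f(w) = w^{-1}$ and $g(w) = w$ together with the standing containment $Aut_w \subseteq End_w$.
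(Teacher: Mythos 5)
Your proof is correct, and it is in fact more complete than the paper's own. In the direction $T_n \cap A_n \subseteq D_n$ you use exactly the paper's device: for $f \in End_w$, the square $f^2$ fixes $w$, the test-word property forces $f^2 \in Aut(F_n)$, and then $f$ itself is an automorphism. The paper compresses this last step into ``$\phi^2\in Aut(F_n)$ so is $\phi$''; you justify it honestly via $F_n = f^2(F_n) \subseteq f(F_n)$ and the Hopfian property of $F_n$ (one could even avoid Hopficity: $f$ is injective because $f^2$ is, and surjective by the same containment, hence bijective). The real divergence is the reverse inclusion $D_n \subseteq T_n \cap A_n$, which the paper never actually establishes: its opening contradiction argument shows $(T_n \cap A_n)\setminus D_n = \emptyset$, i.e.\ $T_n \cap A_n \subseteq D_n$, but then mislabels this as ``equivalently $D_n\subseteq T_n \cap A_n$'' and proceeds to prove the forward inclusion a second time, so the published proof covers only one inclusion twice. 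Your composition argument supplies the missing half: since $w\in D_n\subseteq A_n$, Proposition~\ref{pro:endo} gives $\phi \in End_w = Aut_w$, and for any $g \in End(F_n)$ fixing $w$ one gets $(\phi\circ g)(w)=w^{-1}$, so $\phi \circ g \in End_w = Aut_w$ and $g = \phi^{-1}\circ(\phi\circ g)$ is an automorphism, proving $w \in T_n$. So your route matches the paper where the paper is sound and repairs it where it is garbled; the only point to watch is the one you flagged yourself, namely that passing from $f^2 \in Aut(F_n)$ to $f \in Aut(F_n)$ genuinely uses finite rank.
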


\begin{proof}
Let $w \in (T_n \cap A_n) \setminus D_n.$ Then there exists $\phi\in End_w\setminus Aut_w.$ Which leads to a contradiction that $\phi^2$ fixes $w$ hence $\phi\in Aut_w.$ Hence $(T_n \cap A_n) \setminus D_n=\emptyset$ or equivalently $D_n\subseteq T_n \cap A_n.$ Let $w\in T_n \cap A_n$ and $\phi\in End_w.$ Then $\phi^2$ fixes $w.$ Hence  $\phi^2\in Aut(F_n)$ so is $\phi.$ Thus $\phi\in Aut_w.$
\end{proof}
The above theorem suggests that all words in $T_n \setminus D_n$ are chiral. Cocke and Ho has given an explicit example of chiral word  $[x^{ 440}(x^{ 440})^{(y^{ 440} )}x^{  440}, (y^{  440})^{(x^{ 440} y^{  440} )}y^{ 440}]$  which lies inside $T_2\setminus D_2$ in $F_2$. They asked whether engel words $e_n = [x,_ny]$ are achiral or not. From  Theorem~\ref{thm:Tn}  it is enough to check for an automorphism inverting $e_n$. For this case, we have Whitehead's algorithm~\cite{whitehead}.

It is easy to see that $B_1 =\{x,x^{-1}\}$ and $D_1 = T_1=F_1\setminus \{e\}.$ But if $n \geq 2,$ then we have  $B_n \cap T_n = \emptyset.$ As let $w \in B_n.$ Take $H = \langle w \rangle.$ We will show that $H$ is a proper retract of $F_n$. Hence $w$ can't be a test word.  Write $w = x_1^{r_1}\ldots x_n^{r_n}c$ where $c \in {F_n}^{\prime}$. Since $w \in B_n,$  we have $gcd(r_1, \ldots, r_n) = 1.$ Hence there exists $m_1, \ldots, m_n$ such that $r_1m_1 + \ldots + r_nm_n = 1.$ Define a homomorphism $\phi : F_n \longrightarrow H$ generated by $\phi(x_i) = w^{m_i}$ for all $i \in \{1, \ldots, n\}.$ So we get $\phi(w) = w.$ Hence $H$ is a proper retract of $F_n.$ So by Turner's result, $w$ can not be test word.
\bibliographystyle{siam}
\scriptsize
\bibliography{mybib}
\end{document}